\newtheorem{theorem}{Theorem}[section]
\newtheorem{proposition}[theorem]{Proposition}
\newtheorem{proposition/Definition}[theorem]{Proposition/Definition}
\newtheorem{lemma}[theorem]{Lemma}
\newtheorem{fact}[theorem]{Fact}
\newtheorem{definition}[theorem]{Definition}
\newtheorem{corollary}[theorem]{Corollary}
\theoremstyle{plain}
\numberwithin{equation}{theorem}
\theoremstyle{remark}
\newtheorem{remark}[theorem]{Remark}
\newtheorem{example}[theorem]{Example}
\newcommand{\C}{{\mathbb C}}
\newcommand{\R}{{\mathbb R}}
\newcommand{\Z}{{\mathbb Z}}
\renewcommand{\L}{{\mathcal L}}
\newcommand{\Pl}{{\mathbb P}}
\DeclareMathOperator{\Spec}{Spec}
\DeclareMathOperator{\Eff}{Eff}
\DeclareMathOperator{\Aut}{Aut}
\DeclareMathOperator{\N}{\mathbb{N}}
\DeclareMathOperator{\Pic}{Pic}
\DeclareMathOperator{\Car}{{Car}}
\DeclareMathOperator{\Amp}{{Amp}}
\DeclareMathOperator{\Div}{div}
\newcommand{\lra}{\longrightarrow}
\newcommand{\cO}{\mathcal{O}}
\DeclareMathOperator{\Rat}{Rat}
\title[Polarized dynamics and the Dirichlet property]{Hyperbolic polarized dynamics, pairs of inverse maps and the Dirichlet property}
\author{J. Pineiro}
\begin{document}
\begin{abstract}
 We explore some intersection properties of divisors associated to polarized dynamical systems on algebraic surfaces.  As a consequence, we obtain necessary geometric conditions for the existence of polarizations of hyperbolic type and exhibit compactified divisors associated to automorphisms on K3 surfaces that do not have the Dirichlet property as defined by Moriwaki.
\end{abstract}
\maketitle

\section{Introduction}   Let $K$ be a field, $X$ a geometrically integral, normal projective variety defined over $K$ and $\varphi \colon X \lra X$ a surjective, finite map also defined over $K$.  A situation like this will be called an algebraic dynamical system over the field $K$ and, we will say that the dynamical system $\varphi \colon X \lra X$ is polarized by a divisor $0 \neq E \in \Car(X)$, if there exist $\alpha > 1$ such that $\varphi^* E \sim \alpha E$.  In a similar way a dynamical system on $X$ can be polarized by a real divisor $E \in \Car(X) \otimes \R$.  

Now, for divisors $D_1,D_2,\dots,D_d$ on a normal projective variety $X$ of dimension $d$, we have a multilinear intersection product $(D_1,D_2,\dots,D_d)$, that depends only on the linear equivalence class of the $D_i$.  In particular we have the self-intersection $(D^{d})=(D, \dots , D)$ of a divisor $D$ with itself $d$ times.  Due to linearity, the intersection index is closely related to the polarization property.  Let $X$ be of dimension $d$ and consider the polarized dynamical system $(X,\varphi,E,\alpha)$ on $X$.  From $\varphi^*E \sim \alpha E$ we obtain $\alpha^d (E^{d})= ((\varphi^*E)^{d})= \varphi^*(E^{d}) =
\deg(\varphi) (E^{d}).$ 
As a consequence we have the following two possibilties:
\begin{enumerate}
\item[(1)]  If the polarizing divisor $E$ is ample, then $\deg(\varphi)=\alpha^d$.
\item[(2)]  If $\deg(\varphi) \neq \alpha^d$, then the self-intersection $(E^{d})= 0$.
\end{enumerate}
In the particular case of automorphisms:
\begin{fact} \label{fact} If $\varphi \colon X \lra X$ is an automorphism on $X$ and $(X,\varphi,E,\alpha)$ is a polarized dynamical system, then the self-intersection $(E^{d})= 0$.
\end{fact}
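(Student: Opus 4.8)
The plan is to specialize the dichotomy (1)--(2) established just above to the case at hand, so the argument reduces to the single algebraic identity already displayed. First I would record the standard fact that an automorphism is a finite map of degree $\deg(\varphi) = 1$: indeed $\varphi$ induces an isomorphism of function fields, so the projection formula yields $\varphi^*(E^d) = \deg(\varphi)\,(E^d) = (E^d)$. This is precisely the compatibility of $\varphi^*$ with the top intersection product that underlies the computation preceding the statement, now with the degree factor equal to $1$.

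Next I would invoke the defining inequality $\alpha > 1$ of the polarization. Feeding $\deg(\varphi) = 1$ into the displayed chain $\alpha^d (E^d) = ((\varphi^*E)^d) = \varphi^*(E^d) = \deg(\varphi)\,(E^d)$ gives $\alpha^d (E^d) = (E^d)$, that is, $(\alpha^d - 1)(E^d) = 0$. Since $\alpha > 1$ forces $\alpha^d > 1$, the scalar $\alpha^d - 1$ is nonzero, and therefore $(E^d) = 0$. Equivalently, one observes that $\deg(\varphi) = 1 \neq \alpha^d$, so we are exactly in possibility (2) above, which directly furnishes the vanishing of the self-intersection.

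There is essentially no genuine obstacle here: the entire content is the one-line observation that an automorphism has degree $1$ while $\alpha^d > 1$. The only point deserving a word of care is the assertion $\deg(\varphi) = 1$ together with the $\varphi^*$-compatibility of the intersection index on linear equivalence classes; both are standard and already built into the framework of the preceding paragraph, so nothing further is required beyond quoting the dichotomy with the degree specialized.
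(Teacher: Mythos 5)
Your argument is correct and is exactly the one the paper intends: an automorphism has $\deg(\varphi)=1$, while $\alpha>1$ forces $\alpha^{d}>1$, so the displayed identity $\alpha^{d}(E^{d})=\deg(\varphi)(E^{d})$ (equivalently, possibility (2) of the dichotomy) gives $(E^{d})=0$. Nothing is missing and the route matches the paper's.
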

Suppose that we are in dimension $d=2$.  Let $X$ be a smooth projective surface and let $\varphi \colon X \lra X$ be an automorphism on $X$.  A situation like this was studied by Wehler \cite{We} for families of K3 surfaces with an infinite group of automorphism.  Silverman in \cite{Sil-K3} focused in the family $S_{a,b} $ of Wehler $K3$ surfaces obtained as complete intersection of a $(1,1)$-form and a $(2,2)$-form in $\Pl^2 \times \Pl^2$. The family $S_{a,b}$ comes equipped with projections $p_x$ and $p_y$ representing double coverings of
$\Pl_K^2$.  These projections determine rational maps $\sigma_x$ and $\sigma_y$ in
each of the members of the family.  Let us suppose that $\sigma_x$ and $\sigma_y$ are well defined morphisms and put $\varphi=\sigma_y \circ \sigma_x$.  We can find divisors $E_+$ and $E_-$ polarizing the maps $\varphi$ and $\varphi^{-1}$ respectively.  The real divisor $E_+ + E_-$ is ample and, as part of proposition 2.5 in \cite{Sil-K3}, the intersection numbers $(E_+,D)$ and $(E_-,D)$ are positive for every nonzero effective divisor $D$.

A different family of Wehler K3 surfaces is the family $S_c$ studied by Wang \cite{W1} , Billard \cite{Bi}  and Baragar \cite{B1}, \cite{B2}.  This new family is defined by a $(2,2,2)$-form in $\Pl^1 \times \Pl^1 \times \Pl^1$ and shares many of the properties of $S_{a,b}$.  We can find three involutions $\sigma_{2,3}, \sigma_{1,3}, \sigma_{1,2} \colon S_c \lra S_c$, that are well defined automorphisms on generic members of the family.  In \cite{Bi}, for example, the author introduces divisors $E_1$ and $E_2$ polarizing the maps $\varphi=\sigma_1 \sigma_3 \sigma_2$ and $\varphi^{-1}=\sigma_2 \sigma_3 \sigma_1$ respectively.  Again $E_1+E_2$ is ample and the intersection numbers $(E_1,D)$ and $(E_2,D)$ are positive for every nonzero effective divisor $D$.

The results obtained for the dynamical system $(X, \varphi, E, \alpha)$ associated to the action of an automorphism $\varphi$ on a surface $X$ depend on the existence of a polarized dynamics $(X, \varphi^{-1}, E', \alpha)$ for the inverse map.  Similar results can be obtained for the dynamical system $(X, \varphi, E, \alpha)$ of any self-map $\varphi \colon X \lra X$ provided that the polarization is ``hypebolic" in the sense that $\alpha^{-1}$ is also an eigenvalue of the linear map $\varphi^* \colon \Pic(X) \otimes \R \lra \Pic(X) \otimes \R$. If $\varphi \in \Aut(X)$, the existence of a hyperbolic polarization for $\varphi$, with say ${\varphi^{-1}}^* E' \sim \alpha^{-1} E'$, is equivalent to the existence of a pair of polarized dynamics $(X, \varphi, E, \alpha)$ and $(X, \varphi^{-1}, E', \alpha)$, for the map $\varphi$ and its inverse.  To be able to obtain positive intersection with effective divisors we restrict ourselves to the ample case, i.e., when $E+E'$ is ample.

\begin{proposition} \label{intro_1} Let $(X, \varphi, E, \alpha)$ be an ample hyperbolic polarized system on the smooth surface $X$ over the field $K$.  Then, for every nonzero effective divisor $0 \neq D \in \Eff(X)_\R$, we have $(E,D)>0$.  In the case $\varphi \in \Aut(X)$ we will have two polarized dynamics $(X, \varphi, E, \alpha)$, $(X, \varphi^{-1}, E', \alpha)$ and the intersection numbers $(E,D)$ and $(E',D)$ are both positive. \end{proposition}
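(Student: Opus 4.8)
The plan is to combine a nefness property of $E$ with a dynamical ``degree decay'' argument. First I would show that $E$ is nef. Since $\varphi$ is finite and surjective, $\varphi^*$ preserves ampleness, so $(\varphi^*)^n(E+E')$ is an ample real class for every $n$; using $\varphi^*E\sim\alpha E$ together with the hyperbolicity eigenrelation $\varphi^*E'\sim\alpha^{-1}E'$ one computes $\alpha^{-n}(\varphi^*)^n(E+E')=E+\alpha^{-2n}E'$, which tends to $E$ as $n\to\infty$. Thus $E$ is a limit of ample classes, hence nef, and in particular $(E,D)\ge 0$ for every effective $D$. This reduces the statement to showing that equality never occurs for a nonzero effective divisor.

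Next I would reduce to an irreducible curve. If $0\neq D\in\Eff(X)_\R$ satisfied $(E,D)=0$, then writing $D=\sum a_iC_i$ with $a_i>0$ and $C_i$ irreducible, nefness forces $(E,C_i)=0$ for each $i$; so it suffices to derive a contradiction from an irreducible curve $C$ with $(E,C)=0$.

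The core of the argument will be the behaviour of the forward orbit $C_n:=\varphi^n_*C$, which is a nonzero effective divisor for every $n\ge 0$. By the projection formula and $\varphi^*E\sim\alpha E$ one gets $(E,C_n)=((\varphi^*)^nE,C)=\alpha^n(E,C)=0$, whereas $(E',C_n)=((\varphi^*)^nE',C)=\alpha^{-n}(E',C)$. Setting $H:=E+E'$, which is ample by hypothesis, this yields $(H,C_n)=\alpha^{-n}(E',C)$. On the other hand, ampleness of $H$ provides a uniform lower bound: choosing an integral ample $A$ and $\epsilon>0$ with $H-\epsilon A$ nef gives $(H,Z)\ge\epsilon$ for every nonzero effective divisor $Z$, in particular $(H,C_n)\ge\epsilon>0$ for all $n$. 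Since $\alpha>1$, the value $\alpha^{-n}(E',C)$ tends to $0$ (and is already $\le 0$ if $(E',C)\le 0$), contradicting $(H,C_n)\ge\epsilon$ for $n$ large. Hence no such $C$ exists, and $(E,D)>0$ for every nonzero effective $D$.

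Finally, for $\varphi\in\Aut(X)$ the inverse system $(X,\varphi^{-1},E',\alpha)$ is again an ample hyperbolic polarized system: $(\varphi^{-1})^*E'\sim\alpha E'$, its companion eigenvector is $E$ with $(\varphi^{-1})^*E\sim\alpha^{-1}E$, and $E'+E$ is ample. Applying the case already treated to this inverse system gives $(E',D)>0$ for every nonzero effective $D$. I expect the main obstacle to be the first paragraph, namely verifying nefness of $E$ together with the uniform positive lower bound on the $H$-degree of effective divisors; once these are in place, the decay $(H,\varphi^n_*C)=\alpha^{-n}(E',C)\to 0$ closes the argument at once.
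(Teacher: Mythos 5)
Your argument is correct, and at its core it runs on the same engine as the paper's proof: the identity $((\varphi^n)^*(E+E'),D)=\alpha^n(E,D)+\alpha^{-n}(E',D)$, a uniform positive lower bound on this quantity coming from ampleness of $E+E'$, and then letting $n\to\infty$. Indeed, by the projection formula your $(H,\varphi^n_*C)$ is exactly $((\varphi^n)^*H,C)$, and your bound $(H,Z)\ge\epsilon$ obtained from $H-\epsilon A$ nef plays precisely the role of the paper's Lemma \ref{fund_lemma}, which extracts the same uniform constant from very ampleness. What you do differently is split the conclusion into two steps: first $(E,D)\ge 0$ by exhibiting $E$ as the limit of the ample classes $\alpha^{-n}(\varphi^n)^*(E+E')=E+\alpha^{-2n}E'$, hence nef; then ruling out $(E,C)=0$ for an irreducible curve via the exponential decay of $(H,\varphi^n_*C)$. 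The paper gets the sign and the strictness in one stroke from $\alpha^n(E,D)+\alpha^{-n}(E',D)>c$ for all $n$, working directly with a real effective $D$ and with pullbacks rather than pushforwards; your nefness observation is a genuine structural bonus not recorded in the paper and is what licenses your reduction to irreducible curves. For the automorphism statement you pass to the inverse system $(X,\varphi^{-1},E',\alpha)$ by symmetry, whereas the paper runs the same inequality over all $n\in\Z$; these are equivalent. One small imprecision to fix: the claim $(H,Z)\ge\epsilon$ for \emph{every} nonzero effective divisor fails for real effective divisors with arbitrarily small coefficients, but it does hold for the integral divisors $\varphi^n_*C$ to which you actually apply it, so the argument stands as written once that quantifier is restricted.
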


\begin{example} In section 2.4, we work with the family $S_c$ of K3 surfaces and present three maps $\tau_1,\tau_2$ and $\tau_3$ with ample hyperbolic polarization given by pairs $(S, \varphi, E_i, \alpha)$ and $(S, \varphi^{-1}, E_j, \alpha)$.  Many arithmetic and geometric results concerning rational points were studied in \cite{Bi} in the system $\tau_2=[(S, \sigma_1 \sigma_3 \sigma_2, E_{1}, \beta^3) \, , \,     (S, \sigma_2 \sigma_3 \sigma_1 , E_{2}, \beta^3)]$, where $\beta=\frac{3+\sqrt{5}}{2}$. \end{example}

In section 3 we deal with the arithmetic case, putting canonical metrics on polarizing divisors $E$ associated to systems $(X, \varphi, E, \alpha)$ defined over number fields $K$.  The possibility of putting canonical metrics relative to the map $\varphi \colon X \lra X$ on the associated line bundle $\L=\cO(E)$ at every place $v$ of $K$, was introduced by S. Zhang in \cite{Zhang3}.  In later presentations authors, like for example Chambert-Loir \cite{Cha}, considered analytic spaces $\pi_v \colon X_v^{an} \lra X$ associated to $X$ at each place $v$ of $K$ and metrics on the analytifications $\L_v=\pi_v^*\cO(E)$ for each place $v$ of $K$.  For a real divisor $E$ polarizing a dynamical system $\varphi \colon X \lra X$, there exist a canonical way (Proposition 2.1.1 and Proposition 2.2.1 in \cite{Chen1}) to put a metric $\|.\|_{\varphi,v}$, associated to the map $\varphi$, on the analytification $\L_v=\pi_v^* \L$ of the line bundle $\L=\cO(E)$.  The collection of metrics $(\|.\|_{\varphi,v})_v$ is adelic in the sense that it is induced by the same integral model of $X$ for almost all finite places $v$ (See remark \ref{Canonical metric is adelic} in section 3).  The metric $(\|.\|_{\varphi,v})_v$ is called the canonical metric on $E$ and the metrized divisor $\bar{E}=(E,\|.\|_{\varphi, v})$ is called the canonical compactification of $E$ with respect to the map $\varphi$.  We refer to sections 2 and 3 of \cite{Chen1} for the proofs of the existence and properties of the canonical metric.  For example, in section 3 of  \cite{Chen1}, canonical compactifications are studied in relation to a higher dimensional analogue of Dirichlet unit's theorem.  Suppose that we extend our notion of effective to metrized divisors $\bar{D}$, imposing the extra condition that the canonical section $s_D$ of our effective divisor $D$, has norm $\|s_D\|_{v,\sup} \leq 1$ at every place.  The Dirichlet property for metrized divisors on arithmetic varieties, first considered by Moriwaki in \cite{M1}, can be stated as follows: 

\begin{definition} We say that the adelic metrized divisor $\bar{E}$ satisfies the Dirichlet property if $\bar{E}$ is $\R$-linearly equivalent to an effective metrized divisor. \end{definition}

As a combination of proposition \ref{intro_1} and fact \ref{fact} we are able to get a general result for compactified divisors associated to ample hyperbolic polarizations.  At the same time, we obtain examples of adelic metrized divisors without the Dirichlet property in families of K3 surfaces.

\begin{proposition}  Let $(X,\varphi, E, \alpha)$ be an ample hyperbolic polarized system in the smooth surface $X$, with $\deg(\varphi) \neq \alpha^2$.  Then, the canonical compactification $\bar{E}$ do not satisfy the Dirichlet property.  Also, for a pair of polarized systems $(X, \varphi, E, \alpha)$, $(X, \varphi^{-1}, E', \alpha)$ with $E+E'$ ample, the canonically compactified divisors $\bar{E}$ and $\bar{E}'$ do not satisfy the Dirichlet property.  
\end{proposition}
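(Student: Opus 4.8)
The plan is to reduce the arithmetic statement to a purely geometric incompatibility between two facts already in hand: the vanishing of the geometric self-intersection $(E^{2})=0$ and the strict positivity $(E,D)>0$ for every nonzero effective divisor. The canonical metric on $\bar{E}$ will play no role in the obstruction itself; its only function is to pin down what the Dirichlet property asserts about the underlying divisor, after which the argument is entirely in the intersection theory of the surface $X$.

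First I would argue by contradiction. Suppose $\bar{E}$ satisfies the Dirichlet property, so that $\bar{E}$ is $\R$-linearly equivalent to an effective adelic metrized divisor $\bar{F}=(F,(\|.\|_{v})_{v})$. Forgetting the metrics, the underlying divisor $F\in\Eff(X)_\R$ is effective and $E$ is $\R$-linearly equivalent to $F$. I would next record that $F\neq 0$: since the system is ample, $E$ is not $\R$-linearly trivial in $\Pic(X)\otimes\R$ --- indeed Proposition~\ref{intro_1} applied to any nonzero effective class $H$ gives $(E,H)>0$, which already rules out $E\sim 0$ --- and therefore $F$ is a nonzero effective $\R$-divisor.

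The contradiction then comes from intersecting. On the one hand, because $\deg(\varphi)\neq\alpha^{2}$, possibility~(2) of the introduction (or Fact~\ref{fact} in the automorphism case) gives $(E^{2})=0$. On the other hand, since the intersection index depends only on the $\R$-linear equivalence class of its entries and $F$ is $\R$-linearly equivalent to $E$, we get $(E,F)=(E,E)=(E^{2})=0$. But $F$ is a nonzero effective divisor, so Proposition~\ref{intro_1} forces $(E,F)>0$, a contradiction. Hence $\bar{E}$ cannot satisfy the Dirichlet property. For a pair $(X,\varphi,E,\alpha)$, $(X,\varphi^{-1},E',\alpha)$ with $E+E'$ ample the map $\varphi$ is an automorphism, so Fact~\ref{fact} yields both $(E^{2})=0$ and $((E')^{2})=0$, while Proposition~\ref{intro_1} yields $(E,D)>0$ and $(E',D)>0$ for every nonzero effective $D$; running the previous argument verbatim first for $\bar{E}$ and then for $\bar{E}'$ shows that neither satisfies the Dirichlet property.

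The only genuinely delicate point is the passage in the second paragraph: one must verify that the Dirichlet property for the metrized divisor $\bar{E}$ really does descend to an $\R$-linear equivalence between $E$ and an effective divisor $F$ at the level of ordinary Cartier divisors. This is immediate from the definitions --- the forgetful map from adelic metrized divisors to $\Car(X)\otimes\R$ carries effective metrized divisors to effective divisors and respects $\R$-linear equivalence --- but it is precisely the step that translates Moriwaki's arithmetic notion into the geometric positivity clash, so I would state it explicitly. Everything else is a formal consequence of Proposition~\ref{intro_1} together with the vanishing $(E^{2})=0$.
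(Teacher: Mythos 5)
Your proof is correct and follows essentially the same route as the paper: the paper's one-line argument observes that the Dirichlet property would make the underlying divisor $E$ $\R$-linearly equivalent to an effective divisor, contradicting Corollary \ref{self-intersection on a surface}, whose proof is precisely the combination of $(E^{2})=0$ (Fact \ref{fact} / Remark \ref{lambda to the n}) with Proposition \ref{Necessary condition for effective} that you spell out. Your explicit verification that the effective representative $F$ is nonzero is a small but worthwhile detail that the paper glosses over.
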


\begin{corollary} In the family $S_{a,b}$, the compactified divisors $\bar{E}^{+}, \bar{E}^{-}$ do not have the Dirichlet property.  In the family $S_c$, the compactified divisors $ \bar{E}_{i}$ do not have the Dirichlet property.
\end{corollary}


\section{The geometric case} In this section we study some consequences of the polarization property for dynamical systems on smooth projective surfaces.  We start by introducing ample and effective divisors with real coefficients.

\subsection{Real divisors} Let $X$ be a normal projective variety over a field $K$.  Suppose that $\Car(X)$ represents the group of Cartier divisors on $X$.  A Cartier $\R$-divisor on $X$ is an element of $\Car(X)_\R=\Car(X) \otimes \R$.  The injective map from the multiplicative group $\Rat(X)^\ast$ of rational functions on $X$ into $\Car(X)$ extends to a map $\Rat(X)^\ast_{\R}=\Rat(X)^\times \otimes \R \hookrightarrow \Car(X)_{\R}$.  The principal Cartier divisor associated to $f \in \Rat(X)^\ast_{\R}$ will be denoted by $(f)$.  We will refer to Cartier divisor simply as divisors on $X$.

\begin{definition}
Two $\R$-divisors $D_1$ and $D_2$ are called $\R$-linearly equivalent, denoted $D_1 \sim D_2$, if they differ by a principal real divisor, this is, if there exist $f \in \Rat(X)^\times_\R$ such that $D_1-D_2=(f)$.  
\end{definition}

\begin{remark} Let $D_1, D_2, \dots, D_n \in \Car(X)_\R$, the intersection number $(D_1,D_2,\dots,D_d)$ can be defined by linearity and the result depends only on the $\R$-linear equivalence class of the $D_i$.
\end{remark}

\begin{definition}
An $R$-divisor $D$ is said to be ample (resp. effective) if $D$ can be written as $D=\sum a_i D_i$ with $a_i >0$ and the $D_i$ are ample (resp. effective).  We denote by $\Eff(X)_\R$ and $\Amp(X)_\R$ the cones of effective and ample real divisors on $X$ respectively.
\end{definition}

\subsection{Polarized dynamical systems} Let $X$ be a projective, normal, geometrically integral algebraic variety defined over a field $K$ and $\varphi \colon X \lra X$ a finite, surjective self-map of $X$ also defined over $K$.
 Suppose that $E$ is a nonzero $\R$-divisor on $X$ and for some real number $\alpha >1$, we have the linear equivalence $\varphi^*E \sim \alpha E$.  This situation will be called a polarized dynamical system $(X,\varphi,E,\alpha)$ on $X$ defined over the field $K$.
 
 \begin{remark} \label{lambda to the n}
Consider a normal projective variety $X$ of dimension $d$ and a polarized dynamical system $(X,\varphi,E,\alpha)$ defined over $K$.  Let us denote by $(E^{d})$ the self-intersection of $E$ with itself $n$ times.  As long as $\deg(\varphi)  \neq   \alpha^d$, the identity $$\alpha^d (E^{d})= ((\varphi^*E)^{d})= \varphi^*(E^{d}) =
\deg(\varphi) (E^{d}),$$ gives self-intersection $(E^{d})= 0$.
\end{remark}

\subsection{Dynamics on the family $S_{a,b}$ of K3 surfaces} Consider the family of $S_{a,b} \subset \Pl^2 \times
\Pl^2$ studied by Silverman in \cite{Sil-K3}.  The family $S_{a,b}$ is determined by the following two equations with coefficients in the field $K$,
$$\sum^3_{i,j=1}a_{i,j}x_i y_j=\sum^3_{i,j,k,l=1}b_{i,j,k,l}x_ix_k y_j y_l =0.$$
We can prove \cite{We} that such equations determine K3 surfaces.  The projections $p_x$ and $p_y$ represent double coverings of
$\Pl_K^2$ and determine rational maps $\sigma_x$ and $\sigma_y$ in
each of the members of the family.  Suppose that $\sigma_x, \sigma_y$ are morphisms and we have denoted the pull-back divisors by $D_n^x=p_x^* \{x_n=0\}$ and $D^y_m=p_y^*\{y_m=0\}$ respectively.  The geometry of the family can be used to determine the eigenvalues of $(\sigma_y \circ
\sigma_x)^{\pm 1*}$ in the subspace of $\Car(S_{a,b})_\R \otimes \R$ generated by the divisor classes of $D_n^x$ and $D_m^y$.  Indeed for $\varphi=\sigma_y \circ \sigma_x$ and          $\beta=2 + \sqrt{3}$, the real divisors 
$$E^+=E^+_{mn}=\beta D_x^n-D_y^m, \, \, \qquad E^-=E^-_{mn}=-D_x^n+\beta D_y^m$$ will satisfy the two identities
$$\varphi^{*} E^{+} \sim \beta^2 E^{+}  \text{ and } (\varphi^{-1})^{*} E^{-} \sim \beta^2 E^{-} .$$
For values $(a,b)$ of the parameters such that $S_{a,b}$ is smooth, the rational maps $\varphi=\varphi^+=\sigma_y \circ \sigma_x$ and $\varphi^{-1}=\varphi^-=\sigma_x \circ \sigma_y$ define automorphisms $\varphi^+,\varphi^- \colon S_{a,b} \lra S_{a,b}$ polarized by the divisors $E^+,E^-$ respectively.  We observe that the divisor $E^+ + E^- = (1+\sqrt{3}) (D_1 + D_2)$ is an ample $\R$-divisor on $S_{a,b}$.

\subsection{K3 surfaces in $\Pl^1 \times \Pl^1 \times \Pl^1$} Consider a smooth projective variety $S$ defined by a $(2,2,2)$ form in $\Pl^1 \times \Pl^1 \times \Pl^1$.  For $(x,y,z) \in \Pl^1 \times \Pl^1 \times \Pl^1$, the surface $S=S_c$ can be viewed as zero locus of the polynomial
$$  F(x,y,z)= \sum_{i_l + j_l=2, \, l=0,1,2} c_{i_1,i_2,i_3,j_1,j_2,j_3} x_0^{i_1} x_1^{j_1}y_0^{i_2} y_1^{j_2}z_0^{i_3} z_1^{j_3},$$
where the coefficients $c_{i_1,i_2,i_3,j_1,j_2,j_3} $ belong to a field $K$.  Again by the results in \cite{We}, we have a K3 surface.  If we write $F(x,y,z)=x_0^2 F^x_0(y,z) +x_0x_1 F^x_1(y,z)+ x_1^2F^x_2(y,z) $, the equation $F(X,P_2, P_3)=0$ will have exactly two solutions as long as $(P_2,P_3)$ is not a solution of the system of equations $F^x_1=F^x_2=F^x_3=0$.  With such conditions, for every point $P=(P_1,P_2,P_3) \in S(K)$, we can find $P'=(P_1',P_2,P_3)$ as the other solution of $F(X,P_2,P_3)$.  In this way we have defined a rational map $\sigma_1=\sigma_{2,3} \colon S \lra S$.  In similar way we can define rational maps $\sigma_2=\sigma_{1,3} \colon S \lra S$ and $\sigma_3=\sigma_{1,2} \colon S \lra S$.  For values of the parameters $(c)=(c_{i_1,i_2,i_3,j_1,j_2,j_3} )$ such that the three sets of curves $\{F^x_0,F^x_1, F^x_2\}, \{F^y_0,F^y_1, F^y_2\}, \{F^z_0,F^z_1, F^z_2\}$ have no common points, the maps $\sigma_1, \sigma_2$ and $\sigma_3$ are well defined automorphisms of the surface $S$.  We call this case, the generic case, and work from now on with surfaces of the family $S$ satisfying these conditions.  Let $\{t_0\}$ be a point in $\Pl^1_K$ and $p^i \colon S \lra \Pl^1$ the projection onto the $i$-th component.  Let us denote by $D_i$ for $i=1,2,3$ the ample divisor $D_i= (p^i)^*[t_0]$ in $S$.   For a generic surface $S$, the Picard number is three and we will work with the basis $\mathcal{D}=\{D_1, D_2,D_3\}$ of $\Car(S)_\R$.   The intersection matrix $J$ as well as the action of the maps $\sigma_i$ on elements of the basis were studied in \cite{W1}, \cite{Bi}, \cite{B1} and \cite{B2}.  Suppose that we put $\beta=\frac{3+\sqrt{5}}{2}$, $a=\frac{-3+\sqrt{5}}{2}$ and $b=\frac{-1+\sqrt{5}}{2}$, and name the divisors $$E_1=[1,a,b]_\mathcal{D} \qquad E_5=[b,1,a]_\mathcal{D} \qquad E_3=[a,b,1]_\mathcal{D}$$
$$E_4=[1,b,a]_\mathcal{D} \qquad E_2=[a,1,b]_\mathcal{D} \qquad E_6=[b,a,1]_\mathcal{D}$$



 The maps $\sigma_{i,j,k}^\ast = (\sigma_i \circ \sigma_j \circ \sigma_k)^\ast=\sigma_k^\ast \circ \sigma_j^\ast \circ \sigma_i^\ast$ share the same eigenvalues $\{\beta^3,\beta^{-3},-1\}$ for $\{i,j,k\}=\{1,2,3\}$.  The eigenvectors associated to the different eigenvalues $\lambda$ for $\sigma_{i,j,k}^\ast$ can be computed as presented in the table below.

\begin{center}
\begin{tabular}{|c|c|c|c|} \hline
Morphism & $\lambda=\beta^3$ & $\lambda=\beta^{-3}$ & $\lambda=-1$ \\ \hline
$\sigma_{3,2,1}^\ast=\sigma_1^\ast \circ \sigma_2^\ast \circ \sigma_3^\ast$ & $E_{3}=[a,b,1]$ & $E_4=[1,b,a]$ & $[1,-3,1]$ \\ \hline
$\sigma_{1,3,2}^\ast=\sigma_2^\ast \circ \sigma_3^\ast \circ \sigma_1^\ast$ & $E_{1}=[1,a,b]$ & $E_{2}=[a,1,b]$ & $[1,1,-3]$ \\ \hline
$\sigma_{2,1.3}^\ast=\sigma_3^\ast \circ \sigma_1^\ast \circ \sigma_2^\ast$ & $E_{5}=[b,1, a]$ & $E_{6}=[b,a,1]$ & $[-3,1,1]$ \\ \hline
$\sigma_{3,1,2}^\ast=\sigma_2^\ast \circ \sigma_1^\ast \circ \sigma_3^\ast$ & $E_{6} =[b,a,1]$ & $E_{5}=[b,1,a]$ & $[-3,1,1]$ \\ \hline
$\sigma_{2,3,1}^\ast=\sigma_1^\ast \circ \sigma_3^\ast \circ \sigma_2^\ast$ & $E_{2}=[a,1,b]$ & $E_{1}=[1,a,b]$ & $[1,1,-3]$ \\ \hline
$\sigma_{1,2,3}^\ast=\sigma_3^\ast \circ \sigma_2^\ast \circ \sigma_1^\ast$ & $E_{4}=[1, b, a]$ & $E_{3}=[a,b,1]$ & $[1,-3,1]$ \\ \hline
\end{tabular}
\end{center}  \vspace{2mm}

 For instance, we have three pairs of polarized dynamical systems on $S$ given by a map $\varphi \colon S \lra S$ and its inverse $\varphi^{-1} \colon S \lra S$, namely the pairs:
$$\tau_1=[(S, \sigma_{3,2,1}, E_{3}, \beta^3) \, , \,   (S, \sigma_{1,2,3}, E_{4}, \beta^3)],$$
$$\tau_2=[(S, \sigma_{1,3,2}, E_{1}, \beta^3) \, , \,     (S, \sigma_{2,3,1} , E_{2}, \beta^3)],$$
$$\tau_3=[(S, \sigma_{2,1,3}, E_{5}, \beta^3) \, , \,    (S, \sigma_{3,1,2}, E_{6}, \beta^3)].$$ 
We can check that the divisors $E_3 + E_4$, $E_1+E_2$ and $E_5 + E_6$ are ample real divisors divisors in $\Car(S)_\R$.  The pair of maps $\tau_2$ was work out in full details in propositions 1.5 and 1.8 of \cite{Bi}.

\subsection{Hyperbolic polarizations and automorphisms on algebraic surfaces} In this part we obtain some general intersection properties of real divisors associated to a special type of polarization: the hyperbolic polarizations.  

\begin{definition} A polarized dynamics  $(X, \varphi, E, \alpha)$ defined over $K$ on $X$ will be called hyperbolic polarized if there exist a real divisor $0 \neq E' \in \Car(X)_\R$ such that $\varphi^* E' \sim \frac{1}{\alpha} E'$.  Furthermore, if $E+E'$ is ample, we will say that the algebraic dynamical system $(X,\varphi, E,\alpha)$ is ample hyperbolic polarized.
\end{definition}

\begin{remark} Let $X$ be a normal projective surface defined over a field $K$ and  $(X, \varphi, E, \alpha)$ a polarized dynamical system associated to an automorphism $\varphi \in Aut(X)$ over $K$.   The system  $(X, \varphi, E, \alpha)$ is hyperbolic polarized with divisor $E'$ if and only if we can find a pair of polarized dynamical systems   $(X, \varphi, E, \alpha)$ and $ (X, \varphi^{-1}, E', \alpha)$ associated to the map $\varphi \colon X \lra X$ and its inverse $\varphi^{-1} \colon X \lra X$.  Again in this situation we will say that $(X, \varphi, E, \alpha)$ is ample hyperbolic polarized if $E+E'$ is ample.
\end{remark}

\begin{lemma} \label{fund_lemma} Let $X$ be smooth projective surface, $0 \neq D \in \Eff(X)_\R$ a nonzero real effective divisor on $X$, $E \in \Amp(X)_\R$ an ample real divisor on $X$ and $\varphi \colon X \lra X$ a self-map.  Then, there exist $c >0$ such that $((\varphi^n)^*E,D) > c$ for all natural numbers $n$.
\end{lemma}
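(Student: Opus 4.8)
The plan is to convert the pullback into a pushforward via the projection formula, so that the problem becomes one about the intersection of the \emph{fixed} ample divisor $E$ with the varying family of effective divisors $(\varphi^n)_* D$. Since $\varphi$ is a finite surjective morphism (as in the standing dynamical-systems setup), so is each iterate $\varphi^n$; in particular $\varphi^n$ is proper and contracts no curve. The projection formula for the proper morphism $\varphi^n \colon X \lra X$ then gives
$$((\varphi^n)^* E, D) = (E, (\varphi^n)_* D)$$
for all $n$, where $(\varphi^n)_* D$ denotes the $\R$-linear extension of the cycle-theoretic pushforward. Thus it suffices to bound $(E, (\varphi^n)_* D)$ below by a positive constant independent of $n$.

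First I would describe the pushforward explicitly. Writing $D = \sum_i a_i C_i$ with $a_i > 0$ and $C_i$ distinct irreducible curves, we have $(\varphi^n)_* C_i = d_{n,i}\, \varphi^n(C_i)$, where $\varphi^n(C_i)$ is again an irreducible curve (as $\varphi^n$ contracts nothing) and $d_{n,i} = \deg\!\big(C_i \to \varphi^n(C_i)\big) \geq 1$. Hence $(\varphi^n)_* D$ is a nonnegative combination of irreducible curves whose total mass $\sum_i a_i d_{n,i} \geq \sum_i a_i$ is bounded below; crucially, because all coefficients are positive there is no cancellation even when several $C_i$ share a common image.

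The remaining point is to give each irreducible curve a uniform positive $E$-degree, which is where the reality of $E$ must be handled. I would fix an integral ample divisor $H$ on $X$ and choose $\delta > 0$ small enough that $E - \delta H$ is still ample (possible since $\Amp(X)_\R$ is an open cone); then $E - \delta H$ is nef, so $(E - \delta H, C) \geq 0$ for every effective $C$, giving $(E, C') \geq \delta (H, C')$ for every irreducible curve $C'$. Since $H$ is integral and ample and $C'$ is an integral curve, $(H, C')$ is a positive integer, hence $(H, C') \geq 1$ and $(E, C') \geq \delta$. Combining with the previous paragraph,
$$(E, (\varphi^n)_* D) = \sum_i a_i d_{n,i}\,(E, \varphi^n(C_i)) \geq \delta \sum_i a_i d_{n,i} \geq \delta \sum_i a_i,$$
so the lemma holds with any $c < \delta \sum_i a_i$, for instance $c = \tfrac12 \delta \sum_i a_i$.

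The main obstacle is precisely the \emph{uniformity} in $n$: each individual intersection $((\varphi^n)^* E, D)$ is positive, because $(\varphi^n)^* E$ is ample (pullback of an ample divisor by a finite map) and $D$ is effective, but without further input one cannot exclude that these numbers tend to $0$. The projection formula is what tames this, by trading the $n$-dependent divisor $(\varphi^n)^* E$ for the fixed divisor $E$ paired against curves on $X$; and the comparison $E \geq \delta H$ with an integral ample $H$ supplies the genuine positive floor $\delta$ for the $E$-degree of an arbitrary irreducible curve, which a real ample class does not carry on its own.
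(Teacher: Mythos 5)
Your proof is correct, but it routes the argument differently from the paper. The paper's proof stays on the pullback side: it reduces to integral $D$, writes $E=\sum \alpha_i E_i$ with $\alpha_i>0$ and $E_i$ ample, picks $N$ with $NE_i$ very ample, and uses that $((\varphi^n)^*(NE_i),D)$ is a positive integer, hence $\geq 1$, to get the uniform bound $c=\sum_i \alpha_i/N$. You instead apply the projection formula to trade $((\varphi^n)^*E,D)$ for $(E,(\varphi^n)_*D)$, observe that the pushforward cycles have total mass bounded below by $\sum_i a_i$ (no cancellation, no contracted components since $\varphi$ is finite), and supply the positive floor by squeezing an integral ample $H$ under $E$ via $E-\delta H$ ample. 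Both arguments ultimately rest on the same integrality trick --- a positive integer intersection number is at least $1$ --- but you apply it to the fixed divisor $H$ against arbitrary integral curves, whereas the paper applies it to the $n$-dependent divisors $(\varphi^n)^*(NE_i)$ against $D$. Your version has the advantage of isolating exactly where finiteness of $\varphi$ enters (the paper's claim that $((\varphi^n)^*(NE_i),D)\geq 1$ is left unjustified and silently uses that a finite surjective map pulls ample back to ample, or equivalently your projection-formula/pushforward observation), and of handling the real coefficients on $E$ once and for all through the open-cone perturbation; the paper's version is shorter and avoids discussing cycle pushforward. One cosmetic point: the lemma as stated says only ``self-map,'' so you should flag, as you do, that you are importing the standing hypothesis that $\varphi$ is finite and surjective --- without it the statement can fail, since a map contracting a component of $D$ would make the relevant intersection vanish.
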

\begin{proof} By definition of what it means to be effective and linearity, is enough to do the proof for $0 \neq D \in \Eff(X)$, an integral effective divisor.  Let $E$ be a finite sum $E = \sum \alpha_i E_i$ with $E_i \in \Amp(X)$ ample and $\alpha_i \in \R$.  For some $N \in \N$ we will have $N E_i$ is very ample for all $i$ and $((\varphi^n)^*(NE_i),D) \geq 1$ for all natural numbers $n$.  Then
$$((\varphi^n)^*E,D)=\frac{1}{N} \sum_i \alpha_i ((\varphi^n)^*(N E_i),D) \geq \sum_i \frac{\alpha_i}{N}=c > 0,$$ which is the inequality we want to prove.\end{proof}

\begin{proposition} \label{Necessary condition for effective} Let $(X, \varphi, E, \alpha)$ be polarized system on the smooth surface $X$ over the field $K$.  Suppose that $(X, \varphi, E, \alpha)$ is ample hyperbolic polarized.  Then, for every nonzero effective divisor $0 \neq D \in \Eff(X)_\R$, we have $(E,D)>0$.
\end{proposition}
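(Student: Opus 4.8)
The plan is to play the two polarization relations against each other on the single ample divisor $E+E'$, extracting positivity from the uniform lower bound furnished by Lemma \ref{fund_lemma}. First I would iterate the defining relations. Since $\varphi^* E \sim \alpha E$ and pullback is linear and respects $\R$-linear equivalence, a straightforward induction gives $(\varphi^n)^* E \sim \alpha^n E$; likewise the hyperbolic partner $E'$, which satisfies $\varphi^* E' \sim \frac{1}{\alpha} E'$, gives $(\varphi^n)^* E' \sim \alpha^{-n} E'$. Because the intersection pairing depends only on the $\R$-linear equivalence class of its entries, intersecting against $D$ yields
$$((\varphi^n)^*(E+E'),D) = \alpha^n (E,D) + \alpha^{-n}(E',D)$$
for every $n \in \N$.

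Next I would apply Lemma \ref{fund_lemma} to the ample divisor $E+E'$ (which is ample precisely by the ample hyperbolic hypothesis) and to the fixed nonzero effective divisor $D$. This produces a constant $c > 0$, independent of $n$, with $((\varphi^n)^*(E+E'),D) > c$ for all $n$. Combining with the displayed identity gives
$$\alpha^n (E,D) + \alpha^{-n}(E',D) > c > 0 \qquad \text{for all } n \in \N.$$

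Finally I would let $n \to \infty$ and argue by contradiction. Since $\alpha > 1$, the second term $\alpha^{-n}(E',D)$ tends to $0$. If $(E,D) < 0$, the first term tends to $-\infty$, so the left-hand side eventually drops below $c$; if $(E,D) = 0$, the left-hand side equals $\alpha^{-n}(E',D) \to 0$, again eventually below $c$. Either way we contradict the uniform bound, forcing $(E,D) > 0$.

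I expect the only real content to be the uniformity of $c$ in Lemma \ref{fund_lemma}: it is exactly this $n$-independence that lets the limit argument bite, since otherwise the decaying term $\alpha^{-n}(E',D)$ could not be excluded as the sole source of positivity. Beyond that I anticipate no obstacle; the hyperbolicity is used only to supply the second eigendirection $E'$ whose contribution decays under iteration, and the ampleness of $E+E'$ is used only to invoke the lemma.
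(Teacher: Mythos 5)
Your proposal is correct and follows essentially the same route as the paper: iterate the two eigen-relations, intersect $(\varphi^n)^*(E+E')$ with $D$, invoke Lemma \ref{fund_lemma} for the uniform lower bound $c>0$, and conclude by letting $n\to\infty$. The only difference is that you spell out the final limit argument (the decay of $\alpha^{-n}(E',D)$ ruling out $(E,D)\le 0$), which the paper leaves implicit in the phrase ``since the inequality holds for any $n$.''
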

\begin{proof} The definition of hyperbolic polarized provide us with a divisor $E' \in \Car(X)_\R$ such that $\varphi^* E = \alpha E$ and $\varphi^* E' = \alpha^{-1} E'$.  The condition of ample forces $E+E' \in \Amp(X)_\R$. Now, take a real effective divisor $0 \neq D \in \Eff(X)_\R$, an integer $n > 0$ and compute:
$$\alpha^n (E,D) + \alpha^{-n} (E',D) = ((\varphi^n)^*(E+E'), D) > c > 0,$$
where the last two inequalities are the result of applying \ref{fund_lemma} to the ample divisor $E + E'$ and the effective divisor $D$.  Since the inequality holds for any $n \in \N$, we must have $(E,D)>0$.\end{proof}

\begin{corollary} \label{self-intersection on a surface}  Let $(X, \varphi, E, \alpha)$ be ample hyperbolic polarized system on the smooth surface $X$ over the field $K$.  Then, if $\deg(\varphi) \neq \alpha^2$, the divisor $E$ is not $\R$-linearly equivalent to an effective divisors in $X$.
\end{corollary}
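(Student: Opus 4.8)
The plan is to derive a contradiction from two facts already established: that the self-intersection $(E^2)$ vanishes, and that $E$ meets every nonzero effective divisor strictly positively. So I would begin by assuming the negation, namely that $E \sim D$ for some $D \in \Eff(X)_\R$, and aim to show this is impossible once $\deg(\varphi) \neq \alpha^2$.

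First, since the intersection pairing on a smooth surface depends only on $\R$-linear equivalence classes, the assumption $E \sim D$ gives $(E^2) = (E,D)$. Remark \ref{lambda to the n}, applied in dimension $d = 2$, shows that the polarization identity $\alpha^2 (E^2) = \deg(\varphi)(E^2)$ together with $\deg(\varphi) \neq \alpha^2$ forces $(E^2) = 0$; hence $(E, D) = 0$. On the other hand, because the system is ample hyperbolic polarized, Proposition \ref{Necessary condition for effective} gives $(E, D) > 0$ for every nonzero effective divisor $D$. These two statements are incompatible as soon as $D \neq 0$, which is the contradiction sought.

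The only point requiring genuine care — and thus the main, if modest, obstacle — is excluding the degenerate case $D = 0$, that is $E \sim 0$. I would handle this by exploiting the ample hyperbolic structure: from $\varphi^* E' \sim \alpha^{-1} E'$ one gets $\alpha^{-2}((E')^2) = \deg(\varphi)((E')^2)$, and since $\alpha > 1$ while $\deg(\varphi) \geq 1 > \alpha^{-2}$, this forces $((E')^2) = 0$ as well. Expanding the self-intersection of the ample divisor $E + E'$ then gives $0 < ((E+E')^2) = (E^2) + 2(E,E') + ((E')^2) = 2(E,E')$, so that $(E, E') > 0$. In particular $E$ is not numerically trivial, hence $E \not\sim 0$, and any effective $D$ with $E \sim D$ is necessarily nonzero. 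With this observation the contradiction of the previous paragraph applies without exception, and the corollary follows.
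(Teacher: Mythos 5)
Your proof is correct and follows essentially the same route as the paper: combine $(E^2)=0$ (from Remark \ref{lambda to the n} in dimension $2$) with Proposition \ref{Necessary condition for effective} applied to an effective $D \sim E$ to get the contradiction $0=(E,E)=(E,D)>0$. The only difference is your extra step ruling out the degenerate case $E \sim 0$ via $(E,E')>0$, a point the paper's proof leaves implicit.
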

\begin{proof} The condition $\deg(\varphi) \neq \alpha^2$ for the polarized dynamical system  $(X, \varphi, E, \alpha)$ guarantees $(E,E)=0$ after remark \ref{lambda to the n} (or fact \ref{fact} in the introduction).   As the self-intersection depends only on the $\R$-linear equivalence class of $E$, the result is an application of  proposition \ref{Necessary condition for effective} to $D=E$. \end{proof}

\begin{corollary} \label{The_case-of_Aut} Let $X$ be a smooth projective surface and $\varphi \in \Aut(X)$ such that we have a hyperbolic polarization given by two polarized dynamical systems $(X, \varphi, E, \alpha)$ and $(X, \varphi^{-1}, E', \alpha)$, with $E+E' $ is ample.  Then, for every real nonzero effective divisor $D$, the intersection numbers $(E,D)$ and $(E',D)$ are positive.  In particular, the divisors $E,E'$ can not be $\R$-linearly equivalent to effective divisors.
\end{corollary}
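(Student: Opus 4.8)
The plan is to recognize that the two-system hypothesis is precisely the \emph{ample hyperbolic polarized} condition of the preceding definition, applied symmetrically to $\varphi$ and to $\varphi^{-1}$, so that the entire statement collapses into two applications of Proposition \ref{Necessary condition for effective} followed by one appeal to Fact \ref{fact}. The essential input that is not available for a general finite surjective self-map is that $\varphi$ is an automorphism, so that $(\varphi^{-1})^\ast$ exists and is inverse to $\varphi^\ast$ on $\Car(X)_\R$; this is exactly what lets me pass between the two polarization relations.

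First I would verify that $(X,\varphi,E,\alpha)$ is ample hyperbolic polarized. We are given $\varphi^\ast E \sim \alpha E$ and $(\varphi^{-1})^\ast E' \sim \alpha E'$. Pulling the second relation back by $\varphi^\ast$ and using $\varphi^\ast \circ (\varphi^{-1})^\ast = \mathrm{id}$ gives $E' \sim \alpha\,\varphi^\ast E'$, that is $\varphi^\ast E' \sim \alpha^{-1}E'$; together with the ampleness of $E+E'$ this exhibits $E'$ as the auxiliary divisor witnessing that $(X,\varphi,E,\alpha)$ is ample hyperbolic polarized. Proposition \ref{Necessary condition for effective} then yields $(E,D)>0$ for every nonzero effective real divisor $D$. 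By the evident symmetry $\varphi \leftrightarrow \varphi^{-1}$, $E \leftrightarrow E'$, the same bookkeeping applies to the other system: pulling $\varphi^\ast E \sim \alpha E$ back by $(\varphi^{-1})^\ast$ and using $(\varphi^{-1})^\ast \circ \varphi^\ast = \mathrm{id}$ gives $(\varphi^{-1})^\ast E \sim \alpha^{-1}E$, so $(X,\varphi^{-1},E',\alpha)$ is ample hyperbolic polarized with auxiliary divisor $E$ (again $E+E'$ is ample). A second application of Proposition \ref{Necessary condition for effective} gives $(E',D)>0$ for every nonzero effective $D$.

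Finally, for the ``in particular'' clause I would invoke Fact \ref{fact}: since $\varphi \in \Aut(X)$ we have $\deg(\varphi)=1 \neq \alpha^2$ because $\alpha>1$, so the self-intersections vanish, $(E,E)=0$ and $(E',E')=0$. If $E$ were $\R$-linearly equivalent to a nonzero effective divisor $D$, then, since the intersection index depends only on the $\R$-linear equivalence class, I would get $0=(E,E)=(E,D)>0$ by the first part, a contradiction; the identical argument rules out $E'$. The only step requiring any care is the routine pull-back computation verifying the two auxiliary relations $\varphi^\ast E' \sim \alpha^{-1}E'$ and $(\varphi^{-1})^\ast E \sim \alpha^{-1}E$ and checking that the degree condition $\deg(\varphi)\neq\alpha^2$ is automatic for an automorphism; beyond that the corollary is a direct consequence of the results already established.
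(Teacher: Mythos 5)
Your proof is correct and takes essentially the same route as the paper: everything reduces to Proposition \ref{Necessary condition for effective} applied with $E'$ as the auxiliary divisor witnessing the hyperbolic condition, exactly as in the paper's remark identifying the pair of systems with a hyperbolic polarization. The only cosmetic difference is that for $(E',D)>0$ the paper extends the inequality $\alpha^n(E,D)+\alpha^{-n}(E',D)>c$ to negative $n\in\Z$, whereas you re-apply the proposition symmetrically to $(X,\varphi^{-1},E',\alpha)$ with auxiliary divisor $E$ --- the same mechanism --- and your explicit handling of the ``in particular'' clause via Fact \ref{fact} is just the argument of Corollary \ref{self-intersection on a surface}.
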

\begin{proof}  We already now $(E,D) > 0$ by \ref{Necessary condition for effective}.  Now, consider the inequality 
$$\alpha^n (E,D) + \alpha^{-n} (E',D) = ((\varphi^n)^*(E+E'), D) > c > 0$$
of proposition \ref{Necessary condition for effective} for all $n \in \Z$.  This gives $(E',D) > 0$ as well.
\end{proof}

The following result is the second part of proposition 2.1.1  in \cite{Zhang2} in case $X$ is projective and we have an ample hyperbolic polarized dynamical system $(X, \varphi, E, \alpha)$, with $E$ not ample.

\begin{corollary} \label{Dynamic_condition_on_geometry}  Let $(X, \varphi, E, \alpha)$ be ample hyperbolic polarized system on the smooth surface $X$ over the field $K$ associate to an \'{e}tale map $\varphi \colon X \lra X$ such that $\deg(\varphi) \neq \alpha$.  Let us denote by $K_X$ the canonical divisor of $X$.   Then, for any real number $m$, the divisor $mK_X$ is either zero or not effective on $X$.  In particular, the Kodaira dimension $\kappa(X) \leq 0$.
\end{corollary}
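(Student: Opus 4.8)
The plan is to run the intersection argument of Proposition~\ref{Necessary condition for effective} with the canonical class $K_X$ playing the role of the test divisor, after first using the étale hypothesis to pin down how $\varphi^*$ acts on $K_X$. The key observation is that $K_X$ is fixed by $\varphi^*$ up to linear equivalence, i.e. $\varphi^* K_X \sim K_X$. Indeed, for a finite surjective morphism $\varphi \colon X \lra X$ of smooth surfaces the ramification formula reads $K_X \sim \varphi^* K_X + R$, with $R$ the effective ramification divisor; since $\varphi$ is étale we have $R = 0$, so $\varphi^* K_X \sim K_X$. (Equivalently, étaleness gives $\varphi^* \Omega_X \cong \Omega_X$, hence the same relation on top exterior powers.)

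Next I would compute $(E, K_X)$. Using the identity $(\varphi^* D_1, \varphi^* D_2) = \deg(\varphi)(D_1, D_2)$ for the finite map $\varphi$ that already underlies Remark~\ref{lambda to the n}, together with $\varphi^* E \sim \alpha E$ and $\varphi^* K_X \sim K_X$, one gets
$$\alpha\,(E, K_X) = (\varphi^* E, \varphi^* K_X) = \deg(\varphi)\,(E, K_X),$$
so that $(\deg(\varphi) - \alpha)(E, K_X) = 0$. The hypothesis $\deg(\varphi) \neq \alpha$ then forces $(E, K_X) = 0$. Notice that what enters here is the product of the two eigenvalues, $\alpha \cdot 1 = \alpha$; this is precisely why the relevant condition is $\deg(\varphi) \neq \alpha$ rather than the $\deg(\varphi) \neq \alpha^2$ of Corollary~\ref{self-intersection on a surface}.

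I would then conclude by contradiction. If for some real number $m$ the divisor $mK_X$ were nonzero and effective, then Proposition~\ref{Necessary condition for effective}, applied to the ample hyperbolic system $(X,\varphi,E,\alpha)$ and to $D = mK_X$, would give $(E, mK_X) > 0$; but $(E, mK_X) = m\,(E, K_X) = 0$ by the previous step, a contradiction. Hence $mK_X$ is either zero or not effective. For the statement on Kodaira dimension, I would apply this to positive integers $m$: whenever the pluricanonical divisor $mK_X$ is effective it must be $\R$-linearly trivial, hence numerically trivial, so $h^0(X, \cO(mK_X)) \le 1$ for every $m \ge 1$ and the associated pluricanonical maps have image a point. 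Therefore $\kappa(X) \le 0$.

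The main obstacle is really just the étale step, i.e. verifying that the ramification divisor vanishes so that the eigenvalue of $K_X$ under $\varphi^*$ is pinned to exactly $1$; everything downstream is a direct replay of Proposition~\ref{Necessary condition for effective}. The one place deserving extra care is the final bookkeeping for $\kappa(X)$, where one must note that an effective divisor which is numerically trivial has vanishing intersection with an ample class and is therefore forced to be linearly trivial, so that no positive-dimensional pluricanonical image can occur.
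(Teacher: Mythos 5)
Your proposal is correct and follows essentially the same route as the paper: use \'etaleness to get $\varphi^* K_X \sim K_X$, apply the projection/pullback identity to conclude $(E, mK_X)=0$ from $\deg(\varphi)\neq\alpha$, and then invoke Proposition~\ref{Necessary condition for effective} to rule out $mK_X$ being nonzero effective. The only difference is that you spell out the ramification-formula justification and the final bookkeeping for $\kappa(X)\le 0$, which the paper leaves implicit.
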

\begin{proof} Since the map $\varphi \colon X \lra X$ is \'{e}tale, we have the linear equivalence $\varphi^* K_X \sim K_X$ and $$\alpha(mK_X,E)=(\varphi^*mK_X, \varphi^*E)=\deg(\varphi) (mK_X,E).$$   The condition  $\deg(\varphi) \neq \alpha$ gives $(mK_X,E)=0$ for all numbers $m$.   We can use proposition  \ref{Necessary condition for effective} to obtain that the divisor $mK_X$ is either zero or not effective for $m \in \R$.
\end{proof}

\begin{example} \label{pair of maps for S_ab} For the family $S_{a,b}$ of Wehler K3 surfaces discussed in subsection 2.3, we have ample hyperbolic polarizations associated to the pair of  polarized dynamics $(S_{a,b}, \varphi^+,E^+,\beta^2)$ and $(S_{a,b}, \varphi^-,E^-,\beta^2)$, for $\beta=2+\sqrt{3}$ and $E^+ + E^-= (1+\sqrt{3}) (D_1 + D_2)$.  By corollary \ref{The_case-of_Aut}, the divisors $E^+,E^-$ are not effective on $S_{a,b}$.\end{example}

\begin{example} \label{pair of maps on S_c}  Let $S_c$ be the family of K3 surfaces in $\Pl^1 \times \Pl^1 \times \Pl^1$ discussed in subsection 2.4.  For $\beta=\frac{3+\sqrt{5}}{2}$, the pairs $\tau_1,\tau_2$ and $\tau_3$ of inverse maps, given by:
$$\tau_1=[(S, \sigma_{3,2,1}, E_{3}, \beta^3) \, , \,   (S, \sigma_{1,2,3}, E_{4}, \beta^3)],$$
$$\tau_2=[(S, \sigma_{1,3,2}, E_{1}, \beta^3) \, , \,     (S, \sigma_{2,3,1} , E_{2}, \beta^3)],$$
$$\tau_3=[(S, \sigma_{2,1,3}, E_{5}, \beta^3) \, , \,    (S, \sigma_{3,1,2}, E_{6}, \beta^3)].$$  
provide us with ample hyperbolic polarizations in $S_c$.  We can apply corollary \ref{The_case-of_Aut} to obtain that $E_i$ are not real effective divisors on $S_c$.
\end{example}

\section{The arithmetic case} In this section we work with geometrically integral, normal projective varieties $X$ defined over a number field $K$ and put metics on divisors or better in their associated line bundles.  We will denote by $\cO_K$ the ring of integers of $K$ and by $\mathcal{M}_K$ the set of places of $K$.

\begin{definition}
Let $X$ be a normal, projective variety of dimension $d$ defined over a number field $K$.  For each place $v \in \mathcal{M}_K$, we associate to $X$ the $v$-adic analytic space $X_v^{an}$.  The association $X \rightsquigarrow X_v^{an}$ is functorial and we have continuous maps $\pi_v \colon X_v^{an} \lra X$.  The analytic spaces $X_v^{an}$ can be described as follows:
  \begin{enumerate}
  \item[(1)] In the Archimedean case,  $\sigma=v \colon K \hookrightarrow \C$, we put  $K_v= K \otimes_\sigma \C$ and the analytic space $X_v$ is just $X_v^{an}=X \times^\sigma_{\Spec(K)} \Spec(K_v)$.
  \item[(2)] In the non-Archimedean case the analytification  $X_v^{an}$ of $X$ is the Berkovich analytic space associated to $X$ over the completion $K_v$ of $K$ at the place $v$.  Berkovich analytic spaces, introduced in \cite{Be}, are locally compact and locally arc-connected.  We refer to section 1.2 and section 1.3 of \cite{B3} for a summary of their properties.  
  \end{enumerate}
\end{definition}
 
 \begin{definition}  Let $X$ be a variety over the number field $K$ and $\L$ a line bundle on $X$.  A metric $\|.\|$ on $\L$ is a collection of metrics $(\|.\|_v)_{v \in \mathcal{M}_K}$, where $\|.\|_v$ is a metric on the analytification $\L_v^{an}=\pi_v^*L$, for every place $v \in \mathcal{M}_K$.  The Green function $g_v$ associated to the metric $\|.\|$ is defined at each place $v$ of $K$ as $g_v(x)=-\log\|s(x)\|_v$, where $s$ is a section of $\L$.
\end{definition}

 A metrized $\R$-divisor $\bar{D}$ will be a pair $(D,\|.\|)$, where $D$ is Cartier $\R$-divisor on $X$ and $\|.\|$ is a metric on the associated line bundle $\cO(D)$.  For instance, for $f \in \Rat(X)^\ast_{\R}$, the principal $\R$-divisor $(f)$ naturally defines a metrized $\R$-divisor $\widehat{(f)}$ when we put, at each place $v$ of $K$, the absolute value of the associate section $|f^{-1}(x)|_v=1$.

\begin{definition} Let $(X,\varphi,E,\alpha)$ be a polarized dynamical system on $X$ defined over $K$, in such a way that $\varphi^* E = \alpha E + (f)$ for some $f \in \Rat(X)^\ast_{\R}$.  Let $\|.\|$ be a metric on $E$.  For every place $v$ we have a continuous function $\lambda_{v,(E,\|.\|)} \colon X_v^{an} \lra K_v$ such that
$$\varphi^*\|.\|_v=|f|_v\|.\|_v^\alpha \lambda_{v,(E,\|.\|)}.$$ 
The function $\lambda_{v,(E,\|.\|_\varphi)} \equiv 1$, for almost all places $v$.  The canonical metric associated to the map $\varphi$ is the unique metric $\|.\|_\varphi$ on $E$ satisfying $\lambda_{v,(E,\|.\|_\varphi)} \equiv 1$ for all places $v$.  In this sense we have
$$\varphi^*(E,\|.\|_\varphi)= \alpha (E,\|.\|_\varphi) + \widehat{(f)} .$$
\end{definition}
\begin{remark}\label {Canonical metric is adelic} The existence of such metric is proved in section 2 of \cite{Chen1}.  For each place $v \in \mathcal{M}_K$, it is possible to define a metrized $\R$-divisor $\bar{E}_v$ on $X_v=X \times \Spec(K_v)$ such that $\lambda_{\bar{E}_v}=1$.  Next, we can find a model $(\mathcal{X}_\mathcal{U}, E_\mathcal{U})$ of $(X, E)$ over a Zariski open $\mathcal{U}$ of $\Spec(\cO_K)$ such that the map $\varphi$ extends to a map $\varphi_\mathcal{U} \colon \mathcal{X}_\mathcal{U} \lra \mathcal{X}_\mathcal{U}$.  The uniqueness of the metric $\|.\|_v$ ensures that $g_v$ is induced by the model $(\mathcal{X}_\mathcal{U}, E_\mathcal{U})$ at every $v \in \mathcal{U}$, as described in section 1.3 of \cite{B3}. 
\end{remark}

\begin{definition}   Let $X$ be a variety over the number field $K$ and $\L$ a line bundle on $X$.  A metric $\|.\|$ on $\L$ will be called quasi-algebraic or adelic if it is induced by the same model $(\tilde{X},\tilde{\L})$ of $(X,\L^e)$, for almost all finite places $v$ of $K$.
\end{definition}

 As a result of the discussion in remark \ref{Canonical metric is adelic} the canonical metric associated to a polarized dynamics  $(X,\varphi,E,\alpha)$ is quasi-algebraic. The adelic metrized $\R$-divisor $\bar{E}=(E,\|.\|_\varphi)$ will be called the canonical compactification of the divisor $E$ with respect to the system $(X,\varphi,E,\alpha)$. 

\begin{example} \label{canonical compactification on K3 examples} We denote by $\bar{E}^+$ and $\bar{E}^-$ the canonical compactification of $E^+,E^-$ with respect to the dynamical systems $(S_{a,b}, \varphi^+,E^+,\alpha^2)$ and $(S_{a,b}, \varphi^-,E^-,\alpha^2)$ on the family $S_{a,b}$ of K3 surfaces.  In the same way we denote by $\bar{E}_i$ the canonical compactification of $E_i$ with respect to the pairs of polarized dynamics of example \ref{pair of maps on S_c} on the family $S_c$.
\end{example}

Now we discuss the notion of effectivity for metrized divisors.  Suppose that $s=fs_D$ is a non-zero global section of $\cO(D)$.  Since the divisor $(s)=(f) + D$ is effective, the function $\|s\|_v \colon X_v^{an} \setminus |\Div(s)| \lra \R$ can be extended to a continuous function on $X_v^{an}$, and we put for each place $v \in \mathcal{M}_K$,
$$\|s\|_{v, sup} = \sup_{P \in X_v^{an}} \|s(P)\|_v.$$

\begin{definition} \label{effective+Dirichlet} The arithmetic $\R$-divisor $\bar{D}$ is effective ($\bar{D} \succeq  0$) if $D$ is effective and the canonical section $s_D$ satisfies $\|s_D\|_{v, \sup} \leq 1$ for all places $v \in \mathcal{M}_K$.
We say that the adelic metrized $\R$-divisor $\bar{D}$ satisfies the Dirichlet property if there exist an $\R$-rational function $f \in \Rat(X)_\R^\ast$ such that $\bar{D} + \widehat{(f)} \succeq  0$.   \end{definition}

 The classical Dirichlet unit theorem on $\cO_K$ reflects the fact that on $X=\Spec(\cO_K)$, an arithmetic divisor with non-negative degree satisfies the Dirichlet property.  The study of higher dimensional analogues was introduced by Moriwaki in \cite{M1} and continued in \cite{Chen1}  and \cite{Chen2}.  Positive and negative answers has been given in some cases, while working with canonically compactified divisors.
 
 \begin{proposition} Let $(X,\varphi, E, \alpha)$ be an ample hyperbolic polarized system in the smooth surface $X$, with $\deg(\varphi) \neq \alpha^2$.  Then, the canonical compactification $\bar{E}$ do not satisfy the Dirichlet property. \end{proposition}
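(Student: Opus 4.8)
The plan is to reduce the arithmetic Dirichlet property to the purely geometric non-effectivity statement already established in Corollary \ref{self-intersection on a surface}; the canonical metric itself will play no role.

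First I would unwind Definition \ref{effective+Dirichlet}. By that definition, $\bar{E}$ satisfies the Dirichlet property exactly when there exists $f \in \Rat(X)_\R^\ast$ with $\bar{E} + \widehat{(f)} \succeq 0$. The notion of effectivity for a metrized $\R$-divisor carries two requirements---that the underlying Cartier $\R$-divisor be effective, and that the sup-norm of its canonical section be at most $1$ at every place---but only the first is needed here. The underlying divisor of $\bar{E} + \widehat{(f)}$ is $E + (f)$, which by construction is $\R$-linearly equivalent to $E$. Hence the Dirichlet property for $\bar{E}$ would force $E$ to be $\R$-linearly equivalent to an effective divisor on $X$.

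Next I would invoke Corollary \ref{self-intersection on a surface} under the standing hypotheses of the proposition: $(X,\varphi,E,\alpha)$ is ample hyperbolic polarized on a smooth surface and $\deg(\varphi) \neq \alpha^2$. That corollary states precisely that under these conditions $E$ is not $\R$-linearly equivalent to any effective divisor. This directly contradicts the consequence of the previous paragraph, so no such $f$ can exist, and therefore $\bar{E}$ fails the Dirichlet property.

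I do not expect any genuine obstacle in this argument, since all the substantive content lives upstream in the geometric section: Corollary \ref{self-intersection on a surface} rests on the self-intersection vanishing $(E,E)=0$ coming from $\deg(\varphi) \neq \alpha^2$ (Remark \ref{lambda to the n}), together with the positivity $(E,D) > 0$ for every nonzero effective $D$ supplied by Proposition \ref{Necessary condition for effective}. The one point I would verify carefully is that $\succeq 0$ for a metrized $\R$-divisor genuinely entails effectivity of the underlying $\R$-divisor, so that the obstruction already occurs at the level of divisor classes and the adelic metric never enters the argument.
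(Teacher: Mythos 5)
Your argument is correct and is essentially identical to the paper's own proof: the paper likewise observes that $\bar{E}+\widehat{(f)}\succeq 0$ forces the underlying divisor $E+(f)$ to be effective, contradicting Corollary \ref{self-intersection on a surface}, with the metric playing no role. No further comment is needed.
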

 \begin{proof} If $\bar{E}+\widehat{(f)} =\bar{D} \succeq 0$, then in particular $D$ will be effective which contradicts corollary \ref{self-intersection on a surface}.  \end{proof}
 
 The same proof works for polarizing divisors in pairs of automorphisms on surfaces.  In this case we can prove:

\begin{proposition} For an ample hyperbolic polarized dynamics given by a pair of systems $(X, \varphi, E, \alpha)$ and $(X, \varphi^{-1}, E', \alpha)$, the canonically compactified divisors $\bar{E}, \bar{E}'$ do not satisfy the Dirichlet property.  
\end{proposition}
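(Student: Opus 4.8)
The plan is to follow verbatim the argument of the preceding proposition, but to replace the appeal to Corollary \ref{self-intersection on a surface} by an appeal to Corollary \ref{The_case-of_Aut}, which treats $E$ and $E'$ symmetrically. The point is that Corollary \ref{The_case-of_Aut} already delivers, purely geometrically, the statement that neither $E$ nor $E'$ is $\R$-linearly equivalent to an effective divisor on $X$; the Dirichlet property would force exactly such an equivalence at the level of underlying divisors, so it must fail for both $\bar{E}$ and $\bar{E}'$.

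Concretely, I would argue by contradiction. Suppose $\bar{E}$ satisfies the Dirichlet property. By Definition \ref{effective+Dirichlet} there is an $\R$-rational function $f \in \Rat(X)_\R^\ast$ with $\bar{E} + \widehat{(f)} = \bar{D} \succeq 0$. By the very definition of effectivity for a metrized $\R$-divisor, the underlying Cartier $\R$-divisor $D = E + (f)$ is then effective. Hence $E$ is $\R$-linearly equivalent to the effective divisor $D$. But the hypotheses place us exactly in the setting of Corollary \ref{The_case-of_Aut}: we are given an automorphism $\varphi \in \Aut(X)$ together with the pair $(X,\varphi,E,\alpha)$, $(X,\varphi^{-1},E',\alpha)$ and $E+E'$ ample, and that corollary asserts that $E$ is not $\R$-linearly equivalent to any effective divisor. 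This contradiction shows $\bar{E}$ does not satisfy the Dirichlet property.

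For $\bar{E}'$ the argument is identical after exchanging the roles of $\varphi$ and $\varphi^{-1}$: the pair $(X,\varphi^{-1},E',\alpha)$, $(X,\varphi,E,\alpha)$ is again an ample hyperbolic polarized dynamics, since the ampleness condition $E+E'$ is symmetric in $E$ and $E'$, and Corollary \ref{The_case-of_Aut} equally asserts that $E'$ is not $\R$-linearly equivalent to an effective divisor. Assuming $\bar{E}'+\widehat{(g)} \succeq 0$ for some $g$ would make $E'+(g)$ effective and contradict this. Thus neither $\bar{E}$ nor $\bar{E}'$ has the Dirichlet property.

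I do not expect any real obstacle here, since all the geometric content is already packaged in Corollary \ref{The_case-of_Aut}; the only point requiring care is the bookkeeping that metric-level effectivity $\bar{D} \succeq 0$ entails effectivity of the underlying $\R$-divisor $D$, which is immediate from Definition \ref{effective+Dirichlet}.
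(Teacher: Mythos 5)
Your proposal is correct and is essentially the paper's own argument: the paper simply remarks that ``the same proof works'' as for the preceding proposition, and the intended adaptation is exactly yours --- if $\bar{E}+\widehat{(f)}\succeq 0$ then the underlying divisor $E+(f)$ is effective, contradicting Corollary \ref{The_case-of_Aut}, which handles $E$ and $E'$ symmetrically. No gaps.
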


\begin{corollary} In the family $S_{a,b}$, the compactified divisors $\bar{E}^{+}, \bar{E}^{-}$ do not have the Dirichlet property.  In the family $S_c$, the compactified divisors $ \bar{E}_{i}$ do not have the Dirichlet property.
\end{corollary}

\subsection{Arithmetic degree of compactified divisors} Suppose that $X$ is $d$-dimensional, projective, and geometrically integral normal variety over a number field $K$.  To be able to present the notion of arithmetic degree, we are going to consider a special type of adelic metrized divisors $\bar{D}$ on $X$: the relatively nef divisors.  We will say that the adelic $\R$-divisor $\bar{D}=(D,\|.\|)$ is relative nef if $D$ is a nef divisor and the associated green function $g_v$ is of $(C^0 \cup PSH)$-type for every place $v$ as defined in section 2.1 of \cite{M2}.   

\begin{definition} 
 A divisor $\bar{D}$ is integrable if it can written as the difference of two relatively nef adelic divisors.
\end{definition}

\begin{example} Let  $(X, \varphi, E, \alpha)$ be a polarized dynamical system on $X$ with $E$ ample.  As a consequence of lemma 3.1 in \cite{Chen1}, the canonically compactified divisor $\bar{E}$ is relatively nef. \end{example}
We can have for the integrable divisor $\bar{D}$ a notion of arithmetic degree $\widehat{\deg}(\bar{D}^{d+1})=\widehat{\deg}_X(\bar{D}^{d+1})$.  The definition and properties of the arithmetic degree are discussed in sections 2 and 4 of \cite{M2}.  For example, as consequence of Proposition-Definition 2.4.3 and Proposition 4.5.4 in \cite{M2}, the arithmetic degree will satisfy the following properties:
\begin{enumerate}
\item[(1)] The degree $\widehat{\deg}((\lambda \bar{D})^{d+1})=\lambda^{d+1} \widehat{\deg}(\bar{D}^{d+1})$ for all $\lambda > 0$,
\item[(2)] the degree $\widehat{\deg}_{X'}((\varphi^* \bar{D})^{d+1})= \widehat{\deg}_{X}(\bar{D}^{d+1})$ for a birational morphism $\varphi \colon X' \lra X$ of normal, projective and geometrically integral varieties $X,X'$ over $K$.
\item[(3)] the degree $\widehat{\deg}((\bar{D} + \widehat{(f)})^{d+1})=\widehat{\deg}(\bar{D}^{d+1})$ for $f \in \Rat(X)_\R^\ast$.
\end{enumerate}

 For dynamical systems polarized by an integrable divisor $E$, we close this presentation with an arithmetic analogue of remark \ref{lambda to the n} for integrable divisors.

\begin{proposition} \label{arithmetic degree zero}  Let $K$ be a number field.  Let $\varphi \colon X \lra X$ be a birational morphism such that $(X,\varphi,E,\alpha)$ is a polarized dynamical system defined over $K$, with $\bar{E}$ integrable and $\dim(X)=d$.  Then the arithmetic degree $\widehat{\deg}((E^{d+1},\|.\|_\varphi))=\widehat{\deg}(\bar{E}^{d+1})=0$.
\end{proposition}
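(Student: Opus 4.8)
The plan is to reproduce, at the arithmetic level, the self-similarity argument of remark \ref{lambda to the n}. The geometric polarization $\varphi^*E \sim \alpha E$ upgrades, for the canonical metric, to the functional equation
$$\varphi^*\bar{E} = \alpha \bar{E} + \widehat{(f)}$$
recorded in the definition of $\|.\|_\varphi$, and I would feed this into the three invariance properties (1)--(3) of the arithmetic degree stated just above. First I would check that every arithmetic degree appearing is defined: since $\bar{E}$ is integrable, so is $\alpha\bar{E}$ by scaling, and since $\widehat{(f)}$ is a principal metrized divisor (hence integrable), the pullback $\varphi^*\bar{E}=\alpha\bar{E}+\widehat{(f)}$ is integrable as well.

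The core computation is then the chain
$$\widehat{\deg}(\bar{E}^{d+1}) \overset{(2)}{=} \widehat{\deg}\!\left((\varphi^*\bar{E})^{d+1}\right) = \widehat{\deg}\!\left((\alpha\bar{E}+\widehat{(f)})^{d+1}\right) \overset{(3)}{=} \widehat{\deg}\!\left((\alpha\bar{E})^{d+1}\right) \overset{(1)}{=} \alpha^{d+1}\,\widehat{\deg}(\bar{E}^{d+1}),$$
where property (2) uses that $\varphi\colon X \lra X$ is a \emph{birational} morphism (so the pullback preserves the degree with no extra multiplicative factor), the middle equality is the functional equation for the canonical metric, property (3) absorbs the principal part $\widehat{(f)}$, and property (1) extracts the scalar $\alpha>0$ with weight $d+1$. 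Rearranging gives $(1-\alpha^{d+1})\,\widehat{\deg}(\bar{E}^{d+1})=0$, and since $\alpha>1$ forces $\alpha^{d+1}\neq 1$, I conclude $\widehat{\deg}(\bar{E}^{d+1})=0$.

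The step I would watch most carefully—and the only real subtlety—is the invocation of property (2): it is precisely the birational hypothesis on $\varphi$ that makes the pullback degree-preserving with coefficient $1$. For a genuinely finite map the analogous formula would carry a factor, reading $\widehat{\deg}((\varphi^*\bar{E})^{d+1})=\deg(\varphi)\,\widehat{\deg}(\bar{E}^{d+1})$, and the argument would then only yield vanishing under the additional hypothesis $\deg(\varphi)\neq \alpha^{d+1}$, exactly mirroring the geometric dichotomy of remark \ref{lambda to the n}. Since automorphisms satisfy $\deg(\varphi)=1$ while $\alpha>1$, the birational case stated here already covers the automorphism examples that motivate the proposition.
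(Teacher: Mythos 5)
Your proposal is correct and follows essentially the same route as the paper: both apply the functional equation $\varphi^*\bar{E}=\alpha\bar{E}+\widehat{(f)}$ for the canonical metric and then invoke the three listed properties of the arithmetic degree (birational invariance, invariance under adding a principal metrized divisor, and homogeneity of weight $d+1$) to get $(1-\alpha^{d+1})\,\widehat{\deg}(\bar{E}^{d+1})=0$, whence the vanishing since $\alpha>1$. Your added remarks on integrability of each term and on what would change for a non-birational finite map are sensible elaborations of the same argument, not a different approach.
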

\begin{proof}  Let us denote $\bar{E}_\varphi=(E,\|.\|_\varphi)$.  Using the definition of polarized dynamics and the properties of the arithmetic degree we obtain
$$0=\widehat{\deg}((\varphi^* \bar{E})^{d+1}_\varphi)-\widehat{\deg}((\alpha \bar{E}_\varphi)^{d+1})=\widehat{\deg}(\bar{E}^{d+1}_\varphi)(1-\alpha^{d+1}),$$
and therefore $\widehat{\deg}(\bar{E}^{d+1}_\varphi)=0$.\end{proof}

\bibliography{HPol_bib.bib}{}
\bibliographystyle{plain}

\end{document}